        \newtheorem{thm}{Theorem}[section]
        \def\commutatif{\ar@{}[rd]|{\circlearrowleft}}
        \newcommand{\eq}[1][r]
           {\ar@<-3pt>@{-}[#1]
            \ar@<-1pt>@{}[#1]|<{}="gauche"
            \ar@<+0pt>@{}[#1]|-{}="milieu"
            \ar@<+1pt>@{}[#1]|>{}="droite"
            \ar@/^2pt/@{-}"gauche";"milieu"
            \ar@/_2pt/@{-}"milieu";"droite"}
        \def\dar[#1]{\ar@<2pt>[#1]\ar@<-2pt>[#1]}
        \newcommand{\bigon}[4][r]{
            \ar@/^1pc/[#1]^{#2}_*=<0.3pt>{}="HAUT"
            \ar@/_1pc/[#1]_{#3}^*=<0.3pt>{}="BAS"
            \ar@{=>} "HAUT";"BAS" ^{#4}
          }
        \newcommand{\bigons}[6][r]{  
            \ar@/^2pc/[#1]^{#2}_*=<0.3pt>{}="HAUT"
            \ar@{}    [#1]     ^*=<0.3pt>{}="MILIEUHAUT"
                               _*=<0.3pt>{}="MILIEUBAS"
            \ar[#1]_(0.3){#3}                  
            \ar@/_2pc/[#1]_{#4}^*=<0.3pt>{}="BAS"
            \ar@{=>} "HAUT";"MILIEUHAUT" ^{#5}
            \ar@{=>} "MILIEUBAS";"BAS" ^{#6}
          }
        \theoremstyle{definition}
        \newtheorem{df}[thm]{Definition}
        \theoremstyle{remark}
        \newtheorem{ex}[thm]{Example}
        \newcommand{\inn}{\operatorname{Inn}}
        \newcommand{\Z}{\mathbb{Z}}
        \newcommand{\C}{\mathbb{C}}
        \newcommand{\mR}{\mathcal{R}}
        \newcommand\rTo{\longrightarrow}
        \newcommand\mto{\longmapsto}
        \newcommand\rRack{\triangleright}
        \newcommand{\one}{\mathbf{1}}
        \title{Decomposition of the regular representation for dihedral quandles}
        \author{Mohamed Elhamdadi} 
        \address{Department of Mathematics, 
        University of South Florida, Tampa, FL 33620} 
        \email{emohamed@math.usf.edu} 
        \author{Prasad Senesi} 
        \address{Department of Mathematics, 
        The Catholic University of America,  Washington, DC 20064} 
        \email{senesi@cua.edu} 
        \author{Emanuele Zappala} 
        \address{Yale School of Medicine, 
        	Yale University, New Haven, CT 06511} 
        \email{emanuele.zappala@yale.edu}
\begin{document}

        \maketitle

        \begin{abstract}
        We decompose the regular quandle representation of a dihedral quandle $\mR_n$ into irreducible quandle subrepresentations.  
        \end{abstract}

        \section{Introduction}
Let $\mR_n$ be the dihedral quandle of order $n$, and let $\C \mR_n$ be the $\C$-vector space of all $\C$-valued functions on $\mR_n$.   Then $\C \mR_n$ has the structure of a quandle representation of $\mR_n$, which we call the regular representation of $\mR_n$.  In this note we provide,  for all positive integers $n$,  the decomposition of this regular representation into irreducible quandle subrepresentations.  This problem was first addressed using the notion of a \textit{quandle ring} in \cite{EFT} in 2019, although the results given there are incomplete and do not address dihedral quandles of odd order.  Our main result here was recently reported by one of the authors at the \href{https://cpb-us-e1.wpmucdn.com/blogs.gwu.edu/dist/d/4238/files/2022/04/KiW49p75abstractrev.pdf}{Knots in Washington Meeting} on April 23, 2022, and will be included in a more comprehensive forthcoming publication on the representation theory of quandles.   The authors would also like to note the recent arxiv submission \cite{KS},  where these results are obtained with a different approach for the dihedral quandles of even order.  

        \section{Review of racks and quandles}\label{review}
        \noindent We give a quick review of racks and quandles in this section.

        \begin{df}	
        \cites{EN, Joyce, Matveev} 
        A {\it rack} is a set $X$ provided with a binary operation 
        \[
        \begin{array}{lccc}
        \rRack: & X\times X & \rTo &X \\
         & (x,y) & \mto & x\rRack y
        \end{array}
        \] such that 
        \begin{itemize}
        \item[(i)] for all $x,y\in X$, there is a unique $z\in X$ such that $y=z\rRack x$;
        \item[(ii)]({\it right distributivity}) for all $x,y,z\in X$, we have $(x\rRack y)\rRack z=(x\rRack z)\rRack (y\rRack z)$.
        \end{itemize}
        \end{df}
        For $x \in X$, we denote by $R_x: X \rightarrow X$ the map given by 
        \[ R_x (y) = y \rRack x.\]
        
        Observe that property (i) also reads that for any fixed element $x\in X$, the map $R_x$ is a 
        bijection. 
        Also, notice that the right distributivity condition is equivalent to the relation $R_x(y\rRack z)=R_x(y)\rRack R_x(z)$ for all $y,z\in X$; that is, $R_x$ is an automorphism of the quandle $X$.

        Unless otherwise stated, we will always assume our racks (or quandles) to be finite racks (or quandles).

        \begin{df}
        A {\it quandle} is a rack such that 
         $x\rRack x=x,\forall x\in X$. 
        \end{df}
        
         As each $R_x$ permutes the elements of $X$, we can consider the subgroup $\left\langle R_x \right\rangle_{x \in X}$ in $S_X$ (the symmetric group on $X$) generated by the $R_x$, which we denote by $\inn(X)$ (the group of inner automorphisms of $X$).    This group acts on the set $X$,  and so the group action partitions $X$ into distinct orbits.  We say the quandle $X$ is connected if $\text{Inn}(X)$ acts transitively on $X$ (and hence there is only one orbit).  If $(X, \rRack )$ and $(Y, \rRack)$ are two quandles, a map $f: X \rightarrow Y$
        	 is a \textit{quandle homomorphism} if $f(x \rRack y) = f(x) \rRack f(y)$ for all $x, y \in X$.
        	 
        	\begin{ex}
        	Any group $G$ with the operation $x\rRack y=yxy^{-1}$ is a quandle, called conjugation quandle and denoted by Conj(G). 
        \end{ex}
        \begin{ex}
        	Any group $G$ with the operation $x\rRack y=yx^{-1}y$ is a quandle, called the Core quandle of $G$ and denoted Core(G). 
        \end{ex}
        
        \begin{ex}
        	Let $G$ be a group and $f \in {\rm Aut}(G)$.  Then one can define a quandle structure on $G$ by $x\rRack y=f(xy^{-1}) y $.  It is called a {\it generalized Alexander quandle}.	If $G$ is abelian, the operation becomes $x\rRack y=f(x)+ (Id-f) (y) $, where $Id$ stands for the identity map.  This quandle is called an {\it Alexander quandle}.
        \end{ex}
              	\begin{ex} 
       The set $\mR_n = \left\{ 1, 2, \ldots, n \right\}$, with quandle operation $x \rRack y = 2y - x$ (\text{mod } $n)$ is a quandle, called a \emph{dihedral} quandle. This is an Alexander quandle on the the group $\Z_n$ where $f$ is multiplication by $-1$.
         \end{ex}

\noindent For the remainder of this manuscript, $\mR_n$ will denote the dihedral quandle of order $n$.
        

        \section{Representation theory}
As mentioned in the introduction, the results here will be included in a later manuscript providing more results for the (complex) representation theory of (finite) quandles.  The terminology used here to describe this theory is adapted from group representation theory, a topic for which there is a wealth of literature.  There are many accessible introductions to this topic; see, for example, Chapters 4 and 5 of \cite{MR2808160}.

All vector spaces henceforth are defined over $\C$.  For any positive integer $k$, we set $\omega_k = e^{2 \pi i / k}$.  A \emph{representation} of a finite quandle $(X,*)$ on a finite dimensional complex vector space $V$ is a quandle homomorphism $\rho: X \rightarrow \text{Conj}(GL(V))$.  In other words, for all $x,y \in X$, we have $\rho(x*y)=\rho(y)\rho(x)\rho(y)^{-1}.$ To simplify the notation, we will denote $\rho(x)$ by just $\rho_x$.  Let $V$ and $W$ be two representations of a quandle $X$.  If $\rho^V: X \rightarrow \text{Conj}(GL(V))$ and $\rho^W: X \rightarrow \text{Conj}(GL(W))$ are two representations of $X$, then a linear map $\phi: V \rightarrow W$ for which the following square commutes
        \[
        \xymatrix{
        V \ar[r]^{\phi} \ar[d]_{\rho^V_x} & W \ar[d]^{\rho^W_x} \\
        V \ar[r]^{\phi} & W.
        }
        \]
         for all $x$ is called an intertwiner.  If, furthermore, $\phi$ is an isomorphism then we say that the two representations are isomorphic as representations of $X$, or \emph{equivalent}.  If $\phi: V \hookrightarrow W$ is the inclusion of the linear subspace $V$ into $W$, then we say that $V$ is a \emph{subrepresentation} of $W$. Equivalently, a subspace $U$ of $W$ is a subrepresentation of $W$ if $U$ is invariant under $\rho^W(X)$.  
        
        A representation $V$ of a quandle $X$ is called \emph{irreducible} if its only quandle subrepresentations are $\{0\}$ and $V$. A representation ia called \emph{indecompasable} if it cannot be written as direct sum of nontrivial subrepresentations.  
        
         If $\phi: \text{Inn}(X) \rightarrow \text{Aut}(V)$ is a group homomorphism, then the induced map 
   \[ \tilde{\phi}: X \rightarrow \text{Conj}(\text{Aut}(V)),  \]
   \[ \tilde{\phi}(x) : = \phi(R_x),\]
   is a quandle homomorphism. Therefore any group representation of $\text{Inn}(X)$ induces a quandle representation of $X$.

        \section{The regular representation of a quandle}  For any finite quandle $X$, we denote by $\C X $ the $\C$--vector space of $\C$-valued functions on $X$; equivalently it is the $\C$-vector space generated by basis vectors $\left\{ e_x \right\}_{ x \in X}$,  whose elements are formal sums $f = \sum_{x \in X} a_x e_x$, $a_x \in \C$.  The \textit{regular representation} of $X$ is 
        \[ \lambda: X \rightarrow \text{Conj}(GL(\C X)), \]
        where $\lambda_t(f)(x): = f(R_t^{-1}(x))$. This action of $X$ is equivalent to the right $X$--action on $\C X$ given by the linear extension of $e_x \cdot t = e_{ R_t(x) }=e_{x*t}$.    We note that a subspace $W$ of $\C X$ is a quandle subrepresentation of $
        \C X$ if and only if $W$ is a subrepresentation of $\C X$ as a \textit{group} representation of $\inn(X)$.   If a distinction is necessary, we will refer to a vector space as either a \textit{quandle representation} or a \textit{group representation}.  If two vector spaces $U, V$ are isomorphic as quandle representations of $X$, we will write $U \cong_X V$; if they are isomorphic as group representations of some group $G$, we will write $U \cong_G V$.  
        
        When $|X| \geq 2$, The regular representation always has two nontrivial subrepresentations, one of which is always irreducible: the one--dimensional subspace $\C \one$, where $\one$ is the constant function $\one (x) = 1$, for all $x \in X$. Its vector space complement, $(\C \one)^\perp$, is also a subrepresentation, since $(\C \one)^\perp = \left\{ \sum_{x \in X} a_x e_x |\; \sum a_x = 0 \right\}$, and action by any $t \in X$ only permutes the coefficients $a_x$.  So $\C X$ decomposes into a direct sum of subrepresentations 
        \[ \C X = \C \one \oplus (\C \one)^\perp.\]
        For $x, y \in X$ let $v_{xy} = e_x - e_y$.  If we enumerate $X = \left\{ x_1, \ldots, x_n \right\}$, then a basis for $(\C \one)^\perp$ is $\left\{ v_{x_1 x_2 }, v_{x_2 x_3}, \ldots, v_{x_{n-1}x_n} \right\}$.

        \section{The regular representation of a dihedral quandle}\label{Rep Dihedral}
 
The rest of this paper is devoted to the regular quandle representation of a dihedral quandle $\mR_n$.  We first explicitly describe some quandle representations of $\mR_n$.  

Since quandle subrepresentations of $\C \mR_n$ are in correspondence with group subrepresentations of $\C \mR_n$ (under the action of $\text{Inn}(\mR_n)$),  we obtain the full decomposition of $\C \mR_n$ as a quandle representation of $\mR_n$ by finding the decomposition as a group representation of $\text{Inn}(\mR_n)$. We note that this correspondence between group and quandle representations does not hold in general, but it is \emph{characteristic} of the regular representation. In general, there is no reason to expect any correspondence or similarity between quandle representations of a quandle $X$ and group representations of $\text{Inn}(X)$.

By virtue of the aforementioned correspondence, and the fact that the group $\text{Inn}(\mR_n)$ is a dihedral group, we will consider in detail the representation theory of dihedral groups.
We will let $D_m$ be the dihedral group of order $2m$.  Then it is shown in \cite{EMR} that 
\[ \text{Inn}(\mR_n) \cong \begin{cases} D_{n/2}, & n \text{ even}, \\
D_n, & n \text{ odd}.\end{cases}
\]

We describe the (isomorphism classes of) finite--dimensional irreducible group representations of the dihedral group $D_m$ here, to facilitate statement of results below.  The dihedral group $D_m$ has a presentation given by 
        \[ D_m = \left\langle \alpha, \beta \; \; | \; \; |\alpha| = |\beta| = 2, \; |\alpha \beta | = m \right\rangle. \]
        We fix generators $\alpha, \beta$ for this presentation.  The classification of the finite--dimensional irreducible group representations of $D_m$ falls into two cases: $m$ even and $m$ odd.  
        If $\lambda, \mu \in \C$, we will denote by $\C(\lambda, \mu)$ the one--dimensional group representation of $D_m$ on which $\alpha$ and $\beta$ act by scalar multiplication by $\lambda$ and $\mu$, respectively.  And for nonzero $\lambda \in \C$, we will denote by $W(\lambda)$ the two--dimensional group representation of $D_m$ for which the matrix representations of $\alpha$ and $\beta$ are 
        \[ \alpha \mapsto \begin{bmatrix}
        0 & 1 \\ 
        1 & 0 
        \end{bmatrix}, \; \; \; \; \beta \mapsto \begin{bmatrix}
        0 & \lambda \\
        \lambda^{-1} & 0 
        \end{bmatrix}.
        \]
        With this notation fixed, we provide the classification of the finite--dimensional irreducible group representations of $D_m$.   All such group representations are either 1-- or 2-- dimensional, and are listed here:
        
        \begin{center}
        \begin{tabular}{|c|c|c|}
        \hline 
        $D_m$ & $m$ even & $m$ odd \\
        \hline
        & & \\
        1-dimensional  & $\C(1,1)$, $\C(1,-1)$, & \\ group representations &  
           $\C(-1,1)$, $\C(-1,-1) $& $\C(1,1)$, $\C(-1,-1)$\\
        && \\
        \hline
        &&\\
        2-dimensional &$W(\omega_m^s)$ , & $W(\omega_m^s)$ , \\ group representations &   $1 \leq s \leq m/2-1 $ &   $1 \leq s \leq (m-1)/2$ \\
        &&\\
        \hline
        \end{tabular}
        \end{center}
        We will denote by $\Gamma(D_m)$ the set of (isomorphism classes of) finite--dimensional irreducible group representations of $D_m$.  For example, $D_8$ has 4 representations of dimension 1 and 4 irreducible representations of dimension 2, and they are 
        \[ \Gamma(D_{8}) = \left\{ \C( \pm 1,\pm 1),  \; \; \C( \pm 1, \mp 1), \; \; W(\omega_8), \; \; W(\omega_8^2), \; \; W(\omega_8^3) \right\}, \]
        while $D_9$ has 2 representations of dimension 1 and 3 irreducible representations of dimension 2, and they are 
        \[ \Gamma(D_{9}) = \left\{ \C( \pm 1,\pm 1),  \; \; W(\omega_9), \; \; W(\omega_9^2), \; \; W(\omega_9^3), \; \; W(\omega_9^4) \right\}. \]

Since $\text{Inn}(\mR_n)$ is a dihedral group generated by $R_1$ and $R_2$, these group representations also provide us with examples of 1-dimensional and 2-dimensional irreducible quandle representations of $\mR_n$.  We will make this precise as follows: let $\C(\lambda,  \mu)$ be the 1-dimensional representation of $\text{Inn}(\mR_n)$ for which $R_1$ and $R_2$ act by scalar multiplication by $\lambda$ and $\mu$, respectively, and let $W(\lambda)$ be the two-dimensional representation of $\text{Inn}(\mR_n)$ for which $R_1$ and $R_2$ have matrix representations 
     \[ R_1 \mapsto \begin{bmatrix}
        0 & 1 \\ 
        1 & 0 
        \end{bmatrix}, \; \; \; \; R_2 \mapsto \begin{bmatrix}
        0 & \lambda \\
        \lambda^{-1} & 0 
        \end{bmatrix}.
        \]

        Now we consider the regular quandle representation $\C \mR_n $ of $\mR_n$ and the subrepresentation $(\C \one)^\perp$, in cases $n$ even or $n$ odd. In either case, $(\C \one)^\perp$ is spanned by $\left\{ v_{ij} \right\}_{1 \leq i, j \leq n }$. 
        
        If $n = 2r$ is even,  $\mR_{n}$ has two orbits: the even orbit $\left\{ 2, 4, \ldots, 2r \right\}$, and the odd orbit $\left\{ 1, 3, \ldots, 2r-1 \right\}$.  Let 
        \[ \Phi_{n,0} = \text{sp}\left\{ v_{ij} \right\}_{i \neq j \text{ even}}, \; \; \; \Delta_{n,0} = \left\{ v_{24}, v_{46}, \ldots, v_{2r-2 \; 2r} \right\}, \]
         and 
         \[ \Phi_{n,1} = \text{sp}\left\{ v_{ij} \right\}_{i \neq j \text{ odd}}, \; \; \; \Delta_{n,1} = \left\{ v_{13}, v_{35}, \ldots, v_{2r-3 \; 2r-1} \right\}. \]
        We call $\Phi_{n,0}$ and $\Phi_{n,1}$ the even and odd subspaces, respectively, of $(\C \one)^\perp$.   For $i = 0, 1$, $\Delta_{n,i}$ is a basis for $\Phi_{n,i}$.
        \\
        
        If $n$ is odd, $\mR_n$ has a single orbit.  In this case, we set 
         \[ \Phi_n= \text{sp}\left\{ v_{ij} \right\}_{i \neq j}, \; \; \; \Delta_n = \left\{ v_{12}, v_{23}, \ldots, v_{n-1 \; n} \right\}, \]
         and $\Delta_n$ is a basis for $\Phi_n$.  Because the orbits of $\mR_n$ are as described above, all of the subspaces $\Phi_{n, i}$, $\Phi_n$ are quandle subrepresentations of $\C \mR_n$.
        
        \begin{thm}\label{dihedral_decomp} If $n$ is even, then the decomposition of the regular representation $\C\mR_n$ of $\mR_n$ into irreducible quandle subrepresentations  is, for $n \equiv 0  \bmod  4  $, 
          \[ \C\mR_n  \cong_{\mR_n}  \C( 1, 1)^{\oplus 2} \oplus \C( \pm 1, \mp 1) \bigoplus_{s=1}^{  n/4 - 1  } W(\omega_{n/2}^s)^{\oplus 2}, \]
        \hspace{.1 in}  and for  $n \equiv 2  \bmod  4  $,  \[ \C\mR_n  \cong_{\mR_n} \C( 1, 1)^{\oplus 2} \bigoplus_{s=1}^{ (n-2)/4 } W(\omega_{n/2}^s)^{\oplus 2}. \]
        \\
        If $n$ is odd,  then the decomposition of $\C\mR_n$ into irreducible quandle subrepresentations of $\mR_n$ is 
        \[ \C\mR_n  \cong_{\mR_n} \C (1,1) \bigoplus_{s = 1}^{(n-1)/2} W(\omega_n^{2s}). \]
        \end{thm}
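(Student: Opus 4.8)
The plan is to reduce to group representation theory. As recalled in Section~\ref{Rep Dihedral}, a subspace of $\C\mR_n$ is a quandle subrepresentation precisely when it is an $\inn(\mR_n)$-subrepresentation, and $\inn(\mR_n)\cong D_{n/2}$ for $n$ even and $\inn(\mR_n)\cong D_n$ for $n$ odd. So it suffices to decompose $\C\mR_n$ as a module over the relevant dihedral group $D_m$ and then read off the summands from the classification table of $\Gamma(D_m)$, using throughout the identification of $R_1,R_2$ with the Coxeter generators $\alpha,\beta$.

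The computation underlying both cases is the decomposition of the \emph{standard permutation module} $\Pi_m$ of $D_m$, i.e.\ the $\C$-span of the vertices of a regular $m$-gon on which $R_1R_2$ acts as a cyclic shift of order $m$. I would diagonalise this shift: its eigenvalues are the $m$-th roots of unity, each with multiplicity one, and pairing the eigenline of $\omega_m^k$ with that of $\omega_m^{-k}$ gives, after rescaling the basis, a submodule isomorphic to $W(\omega_m^k)$. This produces $W(\omega_m^k)$ for $1\le k\le\lfloor(m-1)/2\rfloor$, each exactly once and pairwise non-isomorphic by the classification; the eigenvalue $1$ contributes one copy of $\C(1,1)$ (the constant vector), and when $m$ is even the eigenvalue $-1$ contributes one further one-dimensional module, whose type ($\C(1,-1)$ or $\C(-1,1)$) is pinned down by evaluating $R_1$ and $R_2$ on the alternating vector $\sum_y(-1)^y e_{v_y}$ and depends only on which vertex the reflection $R_1$ fixes. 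A dimension count shows this exhausts $\Pi_m$.

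Now I would specialise. For $n$ odd, $\inn(\mR_n)\cong D_n$ acts transitively on $\mR_n$ with $R_1R_2$ equal to translation by $-2$, which generates $\Z_n$; hence $\C\mR_n\cong\Pi_n$, and working in the Fourier basis $\{\sum_x\omega_n^{kx}e_x\}$ one sees that the submodule attached to the eigenvalue pair of $k$ is $W(\omega_n^{2k})$, the doubling coming from the step $-2$. Since $n$ is odd, as $k$ ranges over $1,\dots,(n-1)/2$ the exponents $2k\bmod n$ exhaust, up to $W(\lambda)\cong W(\lambda^{-1})$, a complete irredundant list of the two-dimensional irreducibles of $D_n$, giving $\C\mR_n\cong\C(1,1)\oplus\bigoplus_{s=1}^{(n-1)/2}W(\omega_n^{2s})$. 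For $n=2r$ even, $\mR_n$ splits into its even orbit and odd orbit, each a transitive $\inn(\mR_n)$-set of size $r$; from $\C\mR_n=\C\one\oplus\C(\one_{\mathrm{ev}}-\one_{\mathrm{odd}})\oplus\Phi_{n,0}\oplus\Phi_{n,1}$ the first two summands are copies of $\C(1,1)$ (every inner automorphism of $\mR_n$ preserves parity), while re-indexing each orbit by $\Z_r$ (via $2y\mapsto y$, resp.\ $2y-1\mapsto y$) turns its $\inn(\mR_n)$-action into a transitive dihedral action of $D_r$ on $r$ points with $R_1R_2$ a generating rotation. Applying the decomposition of the previous paragraph to these orbit modules and removing the trivial constituent, each $\Phi_{n,i}$ contributes $\bigoplus_{s=1}^{\lfloor(r-1)/2\rfloor}W(\omega_r^s)$ and, when $r$ is even (i.e.\ $n\equiv0\bmod4$), the two orbits moreover contribute the two distinct one-dimensional modules $\C(\pm1,\mp1)$, as the reflection-eigenvalue check above confirms. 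Putting $r=n/2$ and rewriting $\lfloor(r-1)/2\rfloor$ as $n/4-1$ for $n\equiv0\bmod4$ and $(n-2)/4$ for $n\equiv2\bmod4$ yields the two displayed formulas.

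There is no genuinely hard step: the content is all in the bookkeeping. The place most prone to sign and indexing errors is the last paragraph — keeping straight the identification $W(\lambda)\cong W(\lambda^{-1})$, the factor of $2$ in the exponent for $n$ odd, and exactly which of the modules $\C(\pm1,\mp1)$ appear for $n\equiv0\bmod4$, the latter being sensitive to the labelling of $R_1,R_2$ as reflections of $\inn(\mR_n)$ and to the parity of $n/2$.
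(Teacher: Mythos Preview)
Your proposal is correct and follows the same overall strategy as the paper: reduce to the $\inn(\mR_n)$-module structure, split $\C\mR_n$ according to the orbits of $\mR_n$, and match each orbit summand against the known list $\Gamma(D_m)$ of dihedral irreducibles. The only real difference is in the execution. The paper works throughout in the difference-vector bases $\Delta_{n,i}=\{v_{ij}\}$, writes down the explicit matrices of $R_1,R_2$ (and, for $\Phi_{n,1}$, of $R_{n/2},R_1$) in those bases, and then exhibits concrete vectors $\mathbf{u}_s=\sum_i(1-\omega_r^{si})v_{2i,2i+2}$ and $\mathbf{v}_s=R_1(\mathbf{u}_s)$ spanning each $W(\omega_r^s)$. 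Your route via the Fourier basis $\{\sum_x\omega_m^{kx}e_x\}$ --- diagonalising the rotation $R_1R_2$ first and then pairing conjugate eigenlines --- is the standard and somewhat cleaner way to decompose a dihedral permutation module; it avoids the explicit $(r-1)\times(r-1)$ matrices and makes the identification $U_s=U_{m-s}$ transparent. Conversely, the paper's explicit $\mathbf{u}_s,\mathbf{v}_s$ have the advantage of displaying actual generators of each irreducible summand inside $\C\mR_n$, which is what the worked examples for $n=10,11,12$ tabulate. Your caveat about the sign bookkeeping for the $\C(\pm1,\mp1)$ pieces when $n\equiv0\bmod4$ is exactly right; the paper pins these down by direct computation on $U_{r/2,0}$ and $U_{r/2,1}$.
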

        \begin{proof} For arbitrary $n$, let $\hat{\mathbf{1}} =\sum_{i=1}^n (-1)^{i}e_i \in ( \C \mathbf{1})^\perp$.  Then, as already pointed out before, $\C\mR_n$ decomposes as $ \C\mR_n = \C \mathbf{1} \oplus (\C \mathbf{1})^\perp$, and $\C \mathbf{1} \cong_{\mR_n} \C \hat{\mathbf{1}} \cong_{\mR_n} \C(1,1)$.  We proceed distinguishing the cases $n$ even or $n$ odd. 
        \\
        \\
        \noindent \textbf{Case 1:} $n$ is even.  Let $n= 2r$.  In this case, each $\Phi_{n,i}$ (for $i=0,1)$ is a subrepresentation of $\C\mR_{n}$ of dimension $r-1$, and $(\C \mathbf{1})^\perp$ decomposes into $\mR_n$-subrepresentations as 
        \[ (\C \mathbf{1})^\perp = \C \hat{\mathbf{1}} \oplus \Phi_{n,0} \oplus \Phi_{n,1}. \]
        Now set $[ a_1, \ldots, a_{r-1} ]_0 = \displaystyle{\sum_{i=1}^{r-1}{a_i}v_{2i, 2i+2}}$ (the coordinate vector in the basis $\Delta_{n,0}$).  The right multiplication operators $R_1$ and $R_2$ together generate all of $\text{Inn}(\mR_n) \cong D_{n/2}$, and with respect to the basis $\Delta_{n,0}$, the matrix representations of these operators on $\Phi_{n,0}$ are 
        \begin{equation}\label{matrixrep}
         \left[ R_1 \right]_{\Delta_{n, 0}} = \begin{bmatrix}
        0 & 0 & \cdots & 0 &-1 \\
        0 & 0 & \cdots & -1 & 0 \\
        \vdots & \vdots & \reflectbox{$\ddots$}  & \vdots & \vdots \\
        0 & -1 & \cdots & 0 & 0 \\
        -1 & 0 & \cdots & 0 & 0
        \end{bmatrix}
        , \; \; \; \;  \left[ R_2 \right]_{\Delta_{n, 0}} = \begin{bmatrix}
        1 & 0 & \cdots & 0 & 0 \\
        1 & 0 & \cdots &  0 & -1 \\
        \vdots & \vdots & \reflectbox{$\ddots$}  & \vdots & \vdots \\
        1 & 0 & \cdots & 0 & 0 \\
        1 & -1 & \cdots & 0 & 0
        \end{bmatrix}.
        \end{equation}
        
        \pagebreak 
        
        For example, for $n = 12$, we have 
        \[ R_1\left(  \left[ a_1, a_2, a_3, a_4, a_5 \right]_0 \right) = \left[  -a_5, -a_4, -a_3, -a_2, -a_1 \right]_0,  \]
        \[ R_2\left(  \left[ a_1, a_2, a_3, a_4, a_5 \right]_0 \right) = \left[  a_1, a_1 -a_5, a_1 - a_4, a_1 -a_3, a_1 - a_2 \right]_0. \]

        \noindent  For $1 \leq s \leq r-1$, let 
        \[ \mathbf{u}_s  :=  \left[ 1-\omega_r^s, 1 - (\omega_r^s)^2, \cdots, 1 - (\omega_r^s)^{r-1} \right]_0 = \sum_{i=1}^{r-1} \left(1-(\omega_r^s)^i\right) v_{2i, 2i+2}, \;  \text{ and } \;  \mathbf{v}_s := R_1(\mathbf{u}_s).\]
        Then $\mathbf{v}_s = \displaystyle{\sum_{i=1}^{r-1} \left((\omega_r^s)^{r-i}-1 \right) v_{2i, 2i+2}}$, and   
        \begin{eqnarray*}
        R_2(\mathbf{u}_s) &=& \sum_{i=1}^{r-1} \left((\omega_r^s)^{r+1-i}-\omega_r^s \right) v_{2i, 2i+2}\\
        &=& \omega_r^s  \sum_{i=1}^{r-1} \left((\omega_r^s)^{r-i}-1 \right) v_{2i, 2i+2}\\
        &=& \omega_r^s \mathbf{v}_s.
        \end{eqnarray*}
        Since all $R_i$ are involutions, we also have $R_2(\mathbf{v}_s) = (\omega_r^{-1})^s \mathbf{u}_s$. Let $U_{s,0}$ be the subspace of $\Phi_{n,0}$ spanned by $\mathbf{u}_s$ and $\mathbf{v}_s$.  The above identities show that each $U_{s,0}$ is a quandle subrepresentation of $\Phi_{n,0}$.  If $\mathbf{u}_s$ and $\mathbf{v}_s$ are linearly dependent, then  $U_{s,0}$ is a 1-dimensional quandle subrepresentation.  And if $\mathbf{u}_s$ and $\mathbf{v}_s$ are linearly independent, the matrix representations of $R_1$ and $R_2$ with respect to the basis $\left\{ \mathbf{u}_s, \mathbf{v}_s \right\}$ are
        \[  \left[ R_1 \right]_{\left\{ \mathbf{u}_s, \mathbf{v}_s \right\} }= \begin{bmatrix}
        0 & 1 \\ 
        1 & 0 
        \end{bmatrix}, \; \; \; \; \left[ R_2 \right]_{\left\{ \mathbf{u}_s, \mathbf{v}_s \right\} } = \begin{bmatrix}
        0 & \omega_r^s \\
        \omega_r^{-s} & 0 
        \end{bmatrix},\]
        which gives us $U_{s,0} \cong W(\omega_r^s)$ (both an $\mR_n$-quandle and an $\text{Inn}(\mR_n)$-group  isomorphism).   From the identity $(\omega_r^{r-s})^\ell = (\omega_r^s)^{-\ell}$ (for any integers $\ell, s$),  we obtain $\mathbf{u}_s = - \mathbf{v}_{r-s}$, for $1 \leq s \leq r-1$.  Therefore $U_{s,0} = U_{r-s, 0}$, $1 \leq s \leq r-1$.    And since each $W(\omega_r^s)$ is an irreducible group representation for $\text{Inn}(\mR_n)$ for $1 \leq s \leq r-1$,  it follows that $U_{s,0} \cong_{\mR_n} U_{v,0}$ if and only if $s = v$, for $1 \leq s, v \leq \lfloor r/2 \rfloor$, and $U_{s,0} \cap U_{v,0} = \left\{ 0 \right\}$ for $1 \leq s \neq v \leq \lfloor r/2 \rfloor$. 
 We summarize these results here: \\
        
        \noindent For $r$  even,  $U_{\ell,0} \cap U_{s,0} = \left\{ 0 \right\}$ for $1 \leq \ell \neq s \leq r/2$, and $\displaystyle{\dim(U_{s,0}) = \begin{cases}
        2, & 1 \leq s < r/2\\
        1, & s = r/2
        \end{cases} }$ .  \\ \\ 
        For $r$ odd, $U_{\ell,0} \cap U_{s,0} = \left\{ 0 \right\}$ for $1 \leq \ell \neq s \leq (r-1)/2$, and $\dim(U_{s,0}) = 2$ for all~$s$.
        \\
        \\ 
        Hence we obtain a decomposition of $\Phi_{n,0}$ into irreducible quandle subrepresentations:
        \[   \Phi_{n,0} \cong_{\mR_n} \bigoplus_{s=1}^{\lfloor r/2 \rfloor } U_{s,0} \cong_{\mR_n} \begin{cases}  W(\omega_r)\oplus \cdots \oplus W\left(\omega_r^{r/2-1}\right) \oplus \C(-1, 1) , & r \; \text{even}, \\
         W(\omega_r)\oplus \cdots \oplus W\left(\omega_r^{(r-1)/2}\right) , & r \; \text{odd}. \end{cases}  \]
        The decomposition of $\Phi_{n,1}$ is achieved in an similar fashion - with a few small changes.  We first replace $R_1$, $R_2$ with $R_{n/2}$ and $R_1$, respectively.  The matrix representations of these transformations, with respect to the basis $\Delta_{n,1}$, are given by 
        
        \begin{equation}\label{matrixrep2}
         \left[ R_{n/2} \right]_{\Delta_{n, 1}} = \begin{bmatrix}
        0 & 0 & \cdots & 0 &-1 \\
        0 & 0 & \cdots & -1 & 0 \\
        \vdots & \vdots & \reflectbox{$\ddots$}  & \vdots & \vdots \\
        0 & -1 & \cdots & 0 & 0 \\
        -1 & 0 & \cdots & 0 & 0
        \end{bmatrix}
        , \; \; \; \;  \left[ R_1 \right]_{\Delta_{n, 1}} = \begin{bmatrix}
        1 & 0 & \cdots & 0 & 0 \\
        1 & 0 & \cdots &  0 & -1 \\
        \vdots & \vdots & \reflectbox{$\ddots$}  & \vdots & \vdots \\
        1 & 0 & \cdots & 0 & 0 \\
        1 & -1 & \cdots & 0 & 0
        \end{bmatrix}.
        \end{equation}

        \noindent Then we define 
        \begin{eqnarray*}
        \mathbf{u}_s  &:=&  \left[ 1-\omega_r^s, 1 - (\omega_r^s)^2, \cdots, 1 - (\omega_r^s)^{r-1} \right]_1 \\
        &=& \sum_{i=1}^{r-1} \left(1-(\omega_r^s)^i\right) v_{2i-1, 2i+1}, \; \; \; \; \; \text{ and } \;  \mathbf{v}_s := \omega_r^{-s} R_{n/2}(\mathbf{u}_s).
        \end{eqnarray*}
Then we can show that $R_1(\mathbf{u}_s) = \mathbf{v}_s$, and since 
\[ R_2 = R_{n/2 +2} = R_{n/2\, \rRack 1} = R_1 R_{n/2} R_1^{-1} = R_1 R_{n/2} R_1,  \]
we have $R_2(\mathbf{u}_s) = R_1 R_{n/2} R_1(\mathbf{u}_s) =  \omega_r^{s} \mathbf{v}_s$. For $1 \leq s \leq r-1$,  we define $U_{s,1}$ to be the subspace of $\Phi_{n,1}$ spanned by $\mathbf{u}_s$ and $\mathbf{v}_s$.  Then, as above,  we have 
\[ \dim(U_{s,1} )= \begin{cases} 1, & s = r/2, \, \, r \text{ even} \\
 2,  &  s \neq r/2, \end{cases} \]
and for $s \neq r/2$, the matrix representations of $R_{1}$ and $R_2$ are 
 \[  \left[ R_{1} \right]_{\left\{ \mathbf{u}_s, \mathbf{v}_s \right\} }= \begin{bmatrix}
        0 & 1 \\ 
        1 & 0 
        \end{bmatrix}, \; \; \; \; \left[ R_2 \right]_{\left\{ \mathbf{u}_s, \mathbf{v}_s \right\} } = \begin{bmatrix}
        0 & \omega_r^s \\
        \omega_r^{-s} & 0 
        \end{bmatrix}.\]
This shows 
                    \[   \Phi_{n,1} \cong \bigoplus_{s=1}^{\lfloor r/2 \rfloor } U_{s,1}\cong_{\mR_n} \begin{cases}  W(\omega_r)\oplus \cdots \oplus W\left(\omega_r^{r/2-1}\right) \oplus \C(1, -1) , & r \; \text{even}, \\
         W(\omega_r)\oplus \cdots \oplus W\left(\omega_r^{(r-1)/2}\right) , & r \; \text{odd}. \end{cases}\]

        For $r$ even ($ n \equiv 0 \bmod 4$), the one-dimensional quandle subrepresentations of $\C\mR_{n}$ are:
        \[ \C \mathbf{1} \cong \C(1,1), \; \; \; \; \C \hat{\mathbf{1}} \cong \C(1,1), \; \; \; U_{r/2,0} \cong \C(-1,1), \; \; \;  U_{r/2,1} \cong \C(1,-1), \]
        and the two-dimensional irreducible quandle subrepresentations are $U_{s,0} \cong U_{s,1} \cong W(\omega_r^s)$,  $1 \leq s \leq r/2-1$.
        
        For $r$ odd ($n \equiv 2 \bmod 4)$,  the one-dimensional quandle subrepresentations of $\C\mR_{n}$ are:
        \[ \C \mathbf{1} \cong \C(1,1)  \; \;  \text{ and }\; \; \C \hat{\mathbf{1}} \cong \C(1, 1) ,\]
        and the two-dimensional irreducible quandle subrepresentations are $U_{s,0} \cong U_{s,1} \cong W(\omega_r^s)$,  $1 \leq s \leq (r~-~1)~/~2$.
       \\
       \\
 \noindent \textbf{Case 2:} $n$ odd. Let $n= 2r+1$.  With respect to the basis $\Delta_n = \left\{ v_{12}, v_{23}, \ldots, v_{n-1, n} \right\}$, the elements $R_1$, $R_2 \in \inn(\mR_n)$ have matrix representations
        \begin{equation*}
         \left[ R_1 \right]_{\Delta_n} = \begin{bmatrix}
        0  & \cdots & 0 & -1 & 1 \\
        0  & \cdots  & -1 & 0  & 1 \\
        \vdots  & \reflectbox{$\ddots$}  & \vdots & \vdots & \vdots \\
        -1 & 0 & \cdots & 0 & 1 \\
        0 & 0 & \cdots & 0 & 1
        \end{bmatrix}
        , \; \; \; \;  \left[ R_2 \right]_{\Delta_n} = \begin{bmatrix}
        1 & 0 & \cdots & 0 & 0 \\
        1 & 0 & \cdots &  0 & -1 \\
        \vdots & \vdots & \reflectbox{$\ddots$}  & \vdots & \vdots \\
        1 & 0 & \cdots & 0 & 0 \\
        1 & -1 & \cdots & 0 & 0
        \end{bmatrix}.
        \end{equation*}
         For $1 \leq s \leq n-1$, let 
        \[ \mathbf{u}_s  =  \left[ 1-\omega_n^s, 1 - (\omega_n^s)^2, \cdots, 1 - (\omega_n^s)^{n-1} \right], \; \; \; \text{ and } \; \; \mathbf{v}_s = R_1(\mathbf{u}_s).\]
        We can then show that $\mathbf{v}_s = \omega_n^{-s}\left( \omega_n^{-s} - 1, (\omega_n^{-s})^2 - 1, \ldots, (\omega_n^{-s})^{n-1} - 1\right)$.

        \noindent Let $U_{s}$ be the subspace of $\Phi_{n}$ spanned by $\mathbf{u}_s$ and $\mathbf{v}_s$.   We can show that $\mathbf{u}_s = \mathbf{v}_{n-s}$, hence $U_s = U_{n-s}$.  Furthermore, we have 
        \begin{eqnarray*}
        R_2(\mathbf{u}_s) &=& R_2( 1 - \omega_n^s, 1 - (\omega_n^s)^2, \ldots, 1 - (\omega_n^s)^{n-1})\\
        &=& ( 1 - \omega_n^s, (\omega_n^s)^{n-1} - \omega_n^s, (\omega_n^s)^{n-2} - \omega_n^s, \ldots, (\omega_n^s)^{2} - \omega_n^s) \\
         &=& \omega_n^{2s} \omega_n^{-s} ( \omega_n^{-s} -1, (\omega_n^{-s})^2 -1, \ldots, (\omega_n^{-s})^{n-1} - 1)\\
         &=& \omega_n^{2s} \mathbf{v}_s.
        \end{eqnarray*}
        Therefore, if $\mathbf{u}_s$ and $\mathbf{v}_s$ are linearly independent, the matrix representations of $R_1$ and $R_2$ with respect to the basis $\left\{ \mathbf{u}_s, \mathbf{v}_s \right\}$ of $U_s$ are again
        \[  \left[ R_1 \right]_{\left\{ \mathbf{u}_s, \mathbf{v}_s \right\} }= \begin{bmatrix}
        0 & 1 \\ 
        1 & 0 
        \end{bmatrix}, \; \; \; \; \left[ R_2 \right]_{\left\{ \mathbf{u}_s, \mathbf{v}_s \right\} } = \begin{bmatrix}
        0 & \omega_n^{2s} \\
        \omega_n^{-2s} & 0 
        \end{bmatrix},\]
 We can show linear independence of $\mathbf{u}_s$ and $\mathbf{v}_s$ for $1 \leq s \leq r$, which gives us the decomposition 
 \[  (\C \mathbf{1})^\perp = \bigoplus_{s=1}^r U_s, \] 
 and we obtain the desired decomposition into irreducible representations 
        \[ \C\mR_n  \cong \C (1,1) \bigoplus_{s = 1}^{r} W(\omega_n^{2s}). \]
        \end{proof}
        
        
\subsection*{Examples}
        
        The decomposition of $\C \mR_n$ given in Theorem \ref{dihedral_decomp} falls into 3 cases: $n \equiv 0 \bmod 4$, $n \equiv 2 \bmod 4$, and $n$ odd.  We provide three examples below, one for each of these cases, wherein we provide the decomposition and all irreducible subrepresentations of $\C \mR_{10}$, $\C \mR_{11}$, and $\C \mR_{12}$:
        
        \vspace{0.2 in}
        
        \begin{center}
        Regular quandle representation of $\mR_{10}:$
        \[ \C \mR_{10} = \C \mathbf{1}  \oplus \overbrace{\C \hat{\mathbf{1}} \oplus \underbrace{ U_{1,0} \oplus U_{2,0}}_{\Phi_{10,0}}\oplus\underbrace{ U_{1,1} \oplus U_{2,1}}_{\Phi_{10,1}} }^{(\C \mathbf{1})^{\perp}}\]
        
        \vspace{.1 in}
        
        \begin{tabular}{clllcl}
        subrep of $\C \mR_{10}$ & & irrep of $D_5$ & generated by   & dimension\\[4pt]
        \hline\\[-4pt]
         $\C \mathbf{1}$ & $\cong$ &  $\C(1,1) $ & $\hspace{.5in} \mathbf{1}$ & 1\\[4pt]
        $\C \hat{\mathbf{1}} $ & $ \cong $ &$\C(1,1)$ &  \hspace{.45in} $\hat{\mathbf{1}}$ & 1 \\[4pt]
        $U_{1,0}$ & $ \cong $ & $W(\omega_5) $ & $\sum_{i=1}^4 \left( 1 - \omega_5^i
        \right)v_{2i, 2i+2}$ \hspace{.37in} &  2 \\[4pt]
        $U_{2,0}$ & $ \cong $ & $W(\omega_5^2 ) $ & $\sum_{i=1}^4 \left( 1 - \omega_5^{2i}\right)v_{2i, 2i+2}$ & 2 \\[4pt]
        $U_{1,1}$ & $ \cong $ & $W(\omega_5) $& $\sum_{i=1}^4 \left( 1 - \omega_5^{i}\right)v_{2i-1, 2i+1}$   & 2\\[4pt]
        $U_{2,1}$ & $ \cong $ & $W(\omega_5^2 ) $& $\sum_{i=1}^4 \left( 1 - \omega_5^{2i}\right)v_{2i-1, 2i+1}$ & 2 \\[4pt]
        \end{tabular}
        \end{center}
    
\pagebreak 

\mbox{} 

\vspace{0.4 in}
    
        \begin{center}
        
          Regular quandle representation of $\mR_{11}:$
        \[ \C \mR_{11} = \C \mathbf{1}  \oplus \overbrace{  U_1 \oplus U_{2} \oplus U_{3} \oplus U_4 \oplus U_5 }^{(\C \mathbf{1})^{\perp}}\]
        
        \vspace{.1 in}
        
        \begin{tabular}{clllcl}
        subrep of $\C \mR_{11}$ & & irrep of $D_{11}$ & generated by   & dimension\\[4pt]
        \hline\\[-4pt]
        $\C \mathbf{1}$ & $\cong$ &  $\C(1,1) $ &  \hspace{.5in} $ \mathbf{1}$ & 1\\[4pt]
        $U_{1}$ & $ \cong $ & $W(\omega_{11}^2) $&  $ \sum_{i=1}^8 \left( 1 - \omega_{11}^{2i}\right)e_i$ \hspace{.58in} & 2 \\[4pt]
        $U_{2}$ & $ \cong $ & $W(\omega_{11}^4) $&  $\sum_{i=1}^8 \left( 1 - \omega_{11}^{4i}\right)e_i$& 2 \\[4pt]
        $U_{3}$ & $ \cong $ & $W(\omega_{11}^6) $&  $\sum_{i=1}^8 \left( 1 - \omega_{11}^{6i}\right)e_i$& 2 \\[4pt]
        $U_{4}$ & $ \cong $ & $W(\omega_{11}^8) $&  $\sum_{i=1}^8 \left( 1 - \omega_{11}^{8i}\right)e_i$& 2 \\[4pt]
        $U_{5}$ & $ \cong $ & $W(\omega_{11}^{10}) $&  $\sum_{i=1}^8 \left( 1 - \omega_{11}^{10i}\right)e_i$& 2 \\[4pt]
        \end{tabular}
        \end{center}

        \vfill 
        
        \begin{center}
        
        Regular quandle representation of $\mR_{12}:$
        \[ \C \mR_{12} = \C \mathbf{1}  \oplus \overbrace{\C \hat{\mathbf{1}} \oplus \underbrace{ U_{1,0} \oplus U_{2,0} \oplus U_{3,0}}_{\Phi_{12,0}}\oplus\underbrace{ U_{1,1} \oplus U_{2,1} \oplus U_{3,1}}_{\Phi_{12,1}} }^{(\C \mathbf{1})^{\perp}}\]
        
        \vspace{.1 in}
        
        \begin{tabular}{clllcl}
        subrep of $\C \mR_{12}$ & & irrep of $D_6$ & generated by   & dimension\\[4pt]
        \hline\\[-4pt]
        $\C \mathbf{1}$ & $\cong$ &  $\C(1,1) $ & $\hspace{.5in} \mathbf{1}$ & 1\\[4pt]
        $\C \hat{\mathbf{1}} $ & $ \cong $ &$\C(1,1)$ &  \hspace{.45in} $\hat{\mathbf{1}}$ & 1 \\[4pt]
        $U_{1,0}$ & $ \cong $ & $W(\omega_6) $ & $\sum_{i=1}^5 \left( 1 - \omega_6^i
        \right)v_{2i, 2i+2}$ &  2 \\[4pt]
        $U_{2,0}$ & $ \cong $ & $W(\omega_6^2 ) $ & $\sum_{i=1}^5 \left( 1 - \omega_6^{2i}\right)v_{2i, 2i+2}$ & 2 \\[4pt]
        $U_{3,0}$ & $ \cong $ & $\C(-1,1)$ & $\sum_{i=1}^5 \left( 1 - 
        (-1)^{i}\right)v_{2i, 2i+2}$   & 1 \\[4pt]
        $U_{1,1}$ & $ \cong $ & $W(\omega_6) $& $\sum_{i=1}^5 \left( 1 - \omega_6^i\right)v_{2i-1, 2i+1}$   & 2\\[4pt]
        $U_{2,1}$ & $ \cong $ & $W(\omega_6^2 ) $& $\sum_{i=1}^5 \left( 1 - \omega_6^{2i}\right)v_{2i-1, 2i+1}$ & 2 \\[4pt]
        $U_{3,1}$ & $ \cong $ & $\C(1,-1) $& $\sum_{i=1}^5 \left( 1 - (-1)^i \right)v_{2i-1, 2i+1}$  & 1\\[4pt]
        \end{tabular}
        \end{center}

\vfill 

\pagebreak

        \end{document}